\DeclareMathAlphabet{\mathcal}{OMS}{cmsy}{m}{n}
\SetMathAlphabet{\mathcal}{bold}{OMS}{cmsy}{b}{n}
\title{\LARGE \bf
{On cost design in applications of optimal control}
}
\author{Taouba Jouini and Anders Rantzer
\thanks{*This work has received funding from the European Research Council (ERC) under the European Union's Horizon 2020 research  and innovation program (grant agreement No: 834142) and the Swedish Research Council (grant 2019-0069).}%
\thanks{The authors are with the Department of Automatic Control, LTH, Lund University, Lund, Sweden. E-mails:
        \tt\small \{taouba.jouini, anders.rantzer\}@control.lth.se.}}%
\newcommand*{\rom}[1]{\expandafter\@slowromancap\romannumeral #1@}
\newcommand\oprocendsymbol{\hbox{$\blacksquare$}}
\newcommand{\dd}[0]{\mathrm d}
\newcommand\oprocend{\relax\ifmmode\else\unskip\hfill\fi\oprocendsymbol}
\newcommand{\real}[0]{\mathbb R}
\providecommand{\norm}[1]{\lVert#1\rVert}
\newtheorem{theorem}{Theorem}[section]
\newtheorem{corollary}[theorem]{Corollary}
\newtheorem{proposition}[theorem]{Proposition}
\newtheorem{remark}[]{Remark}
\newtheorem{assumption}[]{Assumption}
\newcommand\rout{\bgroup\markoverwith{\textcolor{red}{/}}\ULon} 
\begin{document}
\maketitle
\thispagestyle{empty}

\begin{abstract}
A new approach to feedback control design based on optimal control is
proposed. Instead of expensive computations of the value function for
different penalties on the states and inputs, we use a control Lyapunov function that amounts to be a value function of an optimal control problem with suitable cost design and then study combinations of input and state penalty that are
compatible with this value function. This drastically simplifies the
role of the Hamilton-Jacobi-Bellman equation, since it is no longer a
partial differential equation to be solved, but an algebraic
relationship between different terms of the cost. The paper
illustrates this idea in different examples, including $\mathcal{H}_{\infty}$
control and optimal control of coupled oscillators.

\end{abstract}
 
\begin{IEEEkeywords}
Optimal control, Stability of nonlinear systems, Lyapunov methods
\end{IEEEkeywords}

\section{Introduction}

 \IEEEPARstart{T}{he} objective in optimal control problems is to transfer the state of a dynamical system with minimum cost from one point to another. The advent of modern control theory, particularly the formulation of the
famous Maximum Principle of Pontryagin \cite{9186331} has had a considerable impact on the treatment of optimization theory.
Dynamic programming gives necessary and sufficient conditions for optimality and optimal control laws in feedback form, which are very satisfactory but suffer from several drawbacks \cite{bertsekas2011dynamic, liberzon2011calculus}. First, analytic solutions can only be obtained in few cases (in particular linear quadratic problems). Second, the Hamilton-Jacobi-Bellman (HJB) partial differential equation (PDE) is in general very hard to solve numerically. The main problem is that the full state space must be discretized and a huge number of samples are needed to get reasonable solutions. This is {\em the curse of dimensionality}. 
For this, many efforts have been dedicated to find solutions of value function for HJB-PDE, either numerically \cite{lukes1969optimal} or by relaxing the equality to inequality using approximate dynamic programming \cite{powell2007approximate}.
 
The traditional way to use optimal control is to view the cost function as a set of tuning knobs that can be used to influence the trade-off between control effort and error decay rates. This works well in idealized settings such as linear quadratic control, but for nonlinear problems the map from cost function to the optimal controller could be overwhelmingly complicated. The purpose of this paper is to show that by carefully restricting the choice of the cost function, a simple map from parameters in the cost function to an explicit expression for the optimal controller can be obtained also for nonlinear systems.
In fact, our analysis provides a novel perspective for the application
of optimal control in engineering systems and makes a significant twist
compared to the classical approach. The idea is that, once a stabilizing feedback controller with a (control) Lyapunov function is found, then by appropriate choice of the cost function, involving state and
input penalties, the control Lyapunov function satisfies the HJB equation and is a value function of the optimal control problem.  As a consequence, a whole family of other cost functions will fit as well for
different penalties on the states and inputs. This makes it possible to design stabilizing controllers that are uniquely optimal for nonlinear systems
in a manner comparable to linear quadratic control for linear systems.
Our approach keeps a simple structure of the cost for nonlinear systems, while adding suitable parametrization and thus circumvents the computational complexity related to solving for a value function by suggesting a fixed (control) Lyapunov function a priori.
For this, we showcase the role the cost design plays in two typical settings of optimal control problems: first for nominal or disturbance-free and second for disturbance attenuation or robust $\mathcal{H}_{\infty}$ optimal control \cite{scherer2001theory, zhou1998essentials,bacsar2008h}. Finally,  we clarify our results with examples related to classical equations in linear and nonlinear control theory. As a continuation of ideas from \cite{jouini2021optimal}, we opt for an application to coupled oscillators that can represent for e.g. controlled inverters in power systems. 


The paper unfurls as follows: Section \ref{sec: cost-des} motivates and provides the main result on cost design for the nominal and disturbance attenuation case. Section~\ref{sec: application} applies our theory to coupled oscillators with numerical simulations.

{\em Notation:} 
Let $\mathds{1}_n$ denote the column vector of all ones and $I_n$ the $n-$th dimensional identity matrix.  We denote by $P>~0$ a symmetric and positive definite matrix and $\real_{>0}$ be the set of positive real numbers. Let $\norm{\cdot}_{P}=\sqrt{(\cdot)^\top P \,(\cdot)}$. Given a vector $v$, let  $\norm{v}_{\infty}=\sup_{i=1\dots n} \vert v_i\vert$, $\underline{\sin}(v)$ and $\underline{\cos}(v)$ be the vector-valued sine and cosine functions. Given a differentiable function $V(x)$, let $\nabla_x V=\frac{\partial V}{\partial x}$ be the the gradient of $V$ at $x$ and $\nabla^2_x V$ is the Hessian of $V$ at x. Given a matrix $A$, let $\text{Im}(A)$ denote its image space.
Consider a connected undirected graph $G=(\mathcal{V}, \mathcal{E})$ consisting of $\vert\mathcal{V}\vert=n$ nodes  and $\vert\mathcal{E}\vert=m$ edges. By assigning an arbitrary orientation to the $m$ edges, the incidence matrix $\mathcal{B}\in\real^{n\times m}$ is
defined element-wise as $\mathcal{B}_{il} = 1$, if node $i$ is the sink of the $l-$th edge, $\mathcal{B}_{il}=-1$ if $i$ is the source of the $l-$th edge, and $\mathcal{B}_{il} = 0$ otherwise. We denote by $\mathcal{N}_i$ the neighbor set of node~$i\in\mathcal{V}$. 

\section{Main result} 
\label{sec: cost-des}
We start our analysis with the following motivating example.

{\em Example 1:} For the matrices $R=R^\top>0$, consider the following nonlinear optimal control problem 
\begin{align*}
 V(x_0):=&\min_u \int^\infty_0 \left(q(x(s))+\norm{u(s)}^2_R\right)\dd s, \\
 &\text{s.t. } \dot x= -\underline{\sin}(x)+u, \quad x(0)=x_0, \\
 &x\in\mathcal{X}=\{x\in\real^n:\{\norm{x}_\infty <\pi/2: \mathds{1}^\top \underline{\cos}(x)\geq c\}, \nonumber
\end{align*}
for some $0<c<n$, for two different cases,
\begin{align*}
  &\textbf{Case 1:}& q(x)&=\norm{x}^2,\\
  &\textbf{Case 2:}& q(x)&=\norm{\underline{\sin}(x)}^2_{I_n+R^{-1}/4}.
\end{align*}
The first case may look simpler on the surface, since the cost function {is quadratic in $x$}. However, a closer look at Case~1 leads to a HJB partial differential equation, that is difficult to solve. At the same time, as we will see in the remainder, Case~2 is a special case of a rich class of problems that have a simple explicit solution. In fact, the optimal control law is given by
\begin{align*}
  u^*(x)&={-\frac{1}{2}} R^{-1}\underline{\sin}(x),
\end{align*}
and the value function amounts to
\begin{align*}
  V(x)=-\mathds{1}^\top_n \underline{\cos}(x)+n, 
\end{align*}
{To see that $V$ is positive definite, note that $V(0)=0$ and $V(x)$ is strictly convex and thus positive definite ($\nabla^2_x V(x)>0$ for all $x\in\mathcal{X}$).}
Notice that the matrix $R$ appears in the expression for $q$, but not in the value function $V$. Hence, when the penalty on the input is increased, the penalty on the corresponding state is decreased. This makes it convenient to use $R$ for tuning with appropriate trade-offs between control {effort} and error decay.

Motivated by the previous example, we consider the following nonlinear optimal control problem
\begin{subequations}
\label{eq: opt-prob}
\begin{align}
\min_{u}\max_w & \int_0^\infty  q(x,R,S)+ \,  \norm{u}^2_R-\xi \norm{w}^2_S \, \dd s,\\
\text{s.t. }& \dot x= f(x)+ G^\top(x)\,u+ \overline G^\top(x)\,w,
\label{eq: sys-dyn}
\\
x(0)& =x_0. \nonumber
\end{align}
\end{subequations}
Here, $x\in\real^n$ denotes the state vector, $x(0)=x_0$ is the initial state and $f(x)$ is a nonlinear vector field representing  a mapping from $\real^n$ to $\real^n$. We assume that $f(x)$ is continuous and locally Lipschitz with $f(0)=0$, that is the zero state is a steady state when no inputs are applied. The input matrix $G(x)=[g^\top_1(x), \dots, g^\top_m(x)]^\top\in\real^{m\times n}$ is given by the nonlinear functions $g_i(x),\, i=1,\dots m$ that are  mappings from $\real^n$ to $\real^n$ and continuous over $\real^n$.  The disturbance input matrix $\overline G(x)=[\overline g^\top_{1}(x), \dots, \overline g^\top_{n_w}(x)]^\top \in \real^{{n_w} \times n }$ and given by the nonlinear functions $\overline g_{i}(x),\, i=1,\dots n_w$, that are mappings from $\real^n$ to $\real^n$ and continuous over $\real^n$.  
We denote by $w\in\real^{n_w}$ an unknown disturbance, $\xi$ is a positive constant, $R=R^\top>0,\,S =S^\top>0$ are design matrices. 
Moreover, the mapping~ $q: \real^n\to\real_{>0}$ vanishes only at the origin, that is $q(0)=0$ and will be determined in the remainder.

Our goal is to find a state feedback controller $u^*(x)\in\real^m$ that solves the following Hamiltonian-Jacobi-Isaacs-Equation (HJIE) to optimality.
\begin{align}
\label{eq: hjb-pde}
\!\!\!\!\!\!\min_{u}{ \max_{w}}\left\{L(x,u,w, R,S)+ \nabla^\top_x V \left(f(x)+\; G^\top(x)\, u\right)\right\}=0,
\end{align}
where $L(x,u,w,R,S)= { q(x,R,S)}+ \,  \norm{u}^2_R-\xi \norm{w}^2_S$, and $V: \real^n \mapsto \real_{> 0}$ is a the value function of the optimal control problem, defined as \cite[Ch.2]{bacsar2008h} $$V (x_0) := \inf_u\sup_w \int_0^\infty  \left(q(x,R,S)+ \,  \norm{u}^2_R-\xi \norm{w}^2_S \right) \dd s. $$

Throughout this work, we illustrate feedback control synthesis via cost design and using a  {\em control Lyapunov} function \cite{Sontag1999}, i.e.,  a Lyapunov function for the closed-loop system associated with some choice of the control law.

\subsection{Cost design for optimal control}
\label{subsec: opt-ctrl}
We start our analysis with the nominal optimal control problem \eqref{eq: opt-prob} and set $w=0$.
In the subsequent analysis, we propose an approach to solve the nonlinear control problem \eqref{eq: opt-prob} to optimality with an appropriate choice of the function $q(x,R)$ {in} the following theorem.
\begin{theorem} 
\label{thm: H2-ctrl}
Consider the nominal optimal control problem~\eqref{eq: opt-prob}, i.e., when $w=0$.
Let $V:\mathds{R}^n\mapsto \mathds{R}_{{ >} 0}$ be a continuously differentiable function associated with a stabilizing feedback control law 
\begin{align}
\label{eq: H2-opt-cost-1}
u^*(x,R)&= -\frac{1}{2}\, R^{-1} G(x)\, \nabla_x V,
\end{align}
 
where, \begin{align}
\label{eq: q-pos}
\nabla_x V^\top\left(f(x) {+} G^\top(x)\, u^*(x,R)\right)<  -\norm{u^{*}(x,R)}^2_R.   \end{align}

Define
\begin{align}
\label{eq: q-x}
  q(x,R)=-\nabla_x V^\top\left(f(x){+} G^\top(x) u^*(x,R)\right) -\norm{u^{*}(x,R)}^2_R.
\end{align} Then, the following statements hold: 
\begin{enumerate}
\item The unique optimal control is given by $u^*$ in \eqref{eq: H2-opt-cost-1}.
\item The optimal control problem~\eqref{eq: opt-prob} {has the optimal value $V(x_0)$}. 
\end{enumerate}
\end{theorem}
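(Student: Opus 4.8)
The plan is a verification argument organized around a completion-of-squares identity that makes the Hamilton--Jacobi--Bellman relation \eqref{eq: hjb-pde} (with $w=0$) hold by construction, so that no PDE is actually solved.

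\emph{Step 1 (algebraic identity).} First I would show that for every $x\in\real^n$ and every $u\in\real^m$,
\begin{align*}
q(x,R)+\norm{u}^2_R+\nabla_x V^\top\big(f(x)+G^\top(x)\,u\big)=\norm{u-u^*(x,R)}^2_R .
\end{align*}
This is immediate from the definition \eqref{eq: q-x} of $q(x,R)$ together with the relation $G(x)\,\nabla_x V=-2R\,u^*(x,R)$ read off from \eqref{eq: H2-opt-cost-1}: substituting and collecting the quadratic and cross terms in $u$ reassembles a perfect square. Two consequences follow at once: since $R>0$, the right-hand side is nonnegative and vanishes precisely at $u=u^*(x,R)$, so $V$ satisfies the HJB equation \eqref{eq: hjb-pde} with $w=0$, and $u^*$ is its unique pointwise minimizer.

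\emph{Step 2 (integration along trajectories and the limit).} For an admissible control $u(\cdot)$ with trajectory $x(\cdot)$, $x(0)=x_0$, we have $\tfrac{\dd}{\dd s}V(x(s))=\nabla_x V^\top\big(f(x)+G^\top(x)u\big)$, so integrating the identity from $0$ to $T$ gives
\begin{align*}
\int_0^T\!\big(q(x,R)+\norm{u}^2_R\big)\,\dd s=V(x_0)-V(x(T))+\int_0^T\!\norm{u-u^*(x,R)}^2_R\,\dd s .
\end{align*}
Since $u^*$ is stabilizing and $V$ is a continuous, positive definite control Lyapunov function, $V(x(T))\to0$ along trajectories that regulate the state, and letting $T\to\infty$ yields $\int_0^\infty\!(q+\norm{u}^2_R)\,\dd s=V(x_0)+\int_0^\infty\!\norm{u-u^*}^2_R\,\dd s\ge V(x_0)$, with equality iff $u\equiv u^*$ almost everywhere. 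Taking $u=u^*$ (whose closed loop converges to the origin by hypothesis, and along which \eqref{eq: q-pos} guarantees the running cost is positive) shows its cost equals exactly $V(x_0)$. This proves that the optimal value of \eqref{eq: opt-prob} with $w=0$ is $V(x_0)$ and that $u^*$ attains it uniquely, i.e. statements 1) and 2).

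\emph{Main obstacle.} The delicate point is the terminal (transversality) step: one must argue that the controls competing for optimality indeed drive $V(x(T))$ to $0$, equivalently pin down the admissible class so that finite cost forces state regulation. This is where positive definiteness of $q$ together with an implicit properness/detectability property of $V$ enters; controls that fail to regulate the state are handled separately by noting their cost is $\ge V(x_0)$ (typically $+\infty$), hence they cannot be optimal. Everything else is the routine completion-of-squares bookkeeping of Step~1.
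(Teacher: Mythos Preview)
Your proof is correct and follows essentially the same verification-via-HJB route as the paper: both show that the choice \eqref{eq: q-x} makes the HJB relation \eqref{eq: hjb-pde} hold with $u^*$ as its unique minimizer, and then invoke the standard verification argument to conclude that $V(x_0)$ is the optimal value. The only difference is presentational: the paper derives $u^*$ via the first-order condition on the Hamiltonian and then defers the ``optimal value equals $V(x_0)$'' step to a textbook reference, whereas your completion-of-squares identity packages the HJB verification and uniqueness in one line and you carry out the integration/transversality step explicitly (correctly flagging the terminal condition $V(x(T))\to 0$ as the only non-algebraic ingredient).
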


\begin{proof}
 Consider the Hamiltonian function $$H(x,u,\lambda)=L (x,u)+\lambda^\top(f(x)+G^\top(x)\,u),$$ where $\lambda\in\real^n$ is the vector of co-state variables. We minimize $H(x,\lambda)$ by calculating,
\begin{align*}
\frac{{\partial} H(x,u,\lambda)}{{\partial} u}&= 2\,  R  \, u^*(x)+\; G(x)\, \lambda. 
\end{align*}
The optimal controller reads as, 
\begin{align*}
 u^*(x)&= -\frac{1}{2} R^{-1} G(x)\, \lambda= - \frac{1}{2} R ^{-1} G(x)\nabla_x V,
\end{align*}
where we set $\lambda=\nabla_x V$, following { \cite[Ch.1.4]{vinter2010optimal}}. This coincides with the stabilizing controller \eqref{eq: H2-opt-cost-1}.

For the sufficiency for optimality {of \eqref{eq: H2-opt-cost-1}}, we plug-in the controller \eqref{eq: H2-opt-cost-1} into \eqref{eq: hjb-pde} and obtain,
\begin{align*}
&  q(x,R) -\norm{G(x) \nabla_x V}^2_{R^{-1} } +\nabla^\top_x V f(x)=0. 
\end{align*}
By choice of the function $q(x,R)$ in \eqref{eq: q-x}, the HJBE is satisfied. {The positive definiteness of $q(x,R)$ follows from the inequality \eqref{eq: q-pos}}. 
We conclude that $V$ is a value function and the control law \eqref{eq: H2-opt-cost-1} is sufficient for optimality.
The optimal value is given by $V(x_0)$ and the proof is standard. See e.g. \cite[Ch 5.]{liberzon2011calculus} 



\end{proof}

\begin{remark}
We make the following observations:
\begin{itemize}
\item 
The inequality \eqref{eq: q-pos} is equivalent to, $$\dot V(x)< -\norm{u^{*}(x)}^2_R.$$ This implies by Lyapunov's second method that the origin is asymptotically stable for all system trajectories in closed-loop with \eqref{eq: H2-opt-cost-1}. 
\item Our approach relies on feedback design via a {control Lyapunov function} $V(x)$ to find a stabilizing controller $u^*(x)$ of the form \eqref{eq: H2-opt-cost-1}. By cost design of $q(x,R)$ as defined in \eqref{eq: q-x}, $V(x)$ is a value function of \eqref{eq: opt-prob} and we recover the optimal controller \eqref{eq: H2-opt-cost-1}.
\item Given a {control Lyapunov function} $V$, the matrix $R>0$ represents a tuning knob that can be used to improve the error decay or minimize the control effort. Note that $V$ is a value function of the optimal control problem~\eqref{eq: opt-prob} with any positive definite matrix $R'$, where ${R}' \leq R$ and associated with the cost function $L(\cdot, R')$ given in \eqref{eq: opt-prob}.
\item The cost {design} in \eqref{eq: q-x} exploits the intrinsic properties of the origin of the {open-loop} or {unforced} system \eqref{eq: sys-dyn} (i.e., when $u=0$) to achieve optimality. In particular, if $\nabla^\top_x V f(x)< 0$, then the inequality \eqref{eq: q-pos} is always satisfied (for any positive definite $R$) and the origin of the {unforced} system is asymptotically stable with the Lyapunov function $V(x)$. In this case, the matrix $R>0$ can be tuned {\em arbitrarily} {with} the same fixed $V(x)$. 
\end{itemize}
\end{remark}

{\em Example 2 (Linear systems)}
Consider the following LTI system together with $q(x,R)=x^\top Q{ (R)}\, x$, where $Q(  R)\in\real^{n\times n}$ is a matrix to be determined {with $R=R^\top>0$}.
\begin{align}
\dot x= A\,x+ B\,u,\ x(0)=x_0,
\label{eq: LTI}
\end{align}
where $A\in\real^{n\times n},\, B\in\real^{n\times m}, \; u\in\real^m$ and $x_0\in\real^n$. Given the Lyapunov function defined by $$V(x)= \frac{1}{2}\, x^\top P \,x, \,P  =P^\top>0,$$we apply Theorem \ref{thm: H2-ctrl} and the optimal controller is given by,
\begin{align}
\label{eq: lin-opt-ctrl-cost}
u^*(x, { R})=-\frac{1}{2} R  ^{-1} B^\top P\, x.    
\end{align}
We demonstrate in the sequel, that the application of optimal control theory is simplified, if we keep $P$ fixed and only tune the matrices $R$ and consequently $Q(R)$ given as in \eqref{eq: q-x} by,
\begin{align}
\label{eq: Q}
 { Q(R)}= \frac{1}{4\,} P\, B \,R  ^{-1}B^\top P-A^\top\, P - P\,A.
\end{align}  


 { Given a positive definite $ { Q}$ defined in \eqref{eq: Q}, the matrix $R$} can be tuned by choice of {any} positive definite matrices $R'\leq~R$  { with $Q({R'})$ in \eqref{eq: Q}}. Thus, we do not need to resolve the algebraic Riccati equation \eqref{eq: Q} for every value of the input matrix $R$, while fixing the positive definite matrix $P$. 

{\em Special case:} Under the assumption that $A$ is asymptotically stable, let $P>0$  satisfy,
\begin{align}
\label{eq: LE}
P\,A+A^\top P=-Q^*,\, Q^* =Q^{*\top}>0.    
\end{align}
Then, the matrix $Q(R')$ in \eqref{eq: Q} is a positive definite matrix for any other positive definite matrix $R'>0$. The resulting control law \eqref{eq: lin-opt-ctrl-cost} is optimal using the matrix $P$ in \eqref{eq: LE}.

The following illustrative example {is taken from} \cite{jouini2021optimal}.

{\em Example 3 (no dynamics):} 
Consider the optimal control problem described by,
\begin{align}
\min\limits_u& \int_0^\infty q (x(s))+\norm{u(s)}^2_R \;\dd s,\, R=R^\top>0,\\
 & \dot x= u,  \quad  { x(0)=x_0,} \nonumber
\end{align}
where $x\in\real^n$ is the state vector, $u\in\real^n$ is the control input and the mapping $q(x,R)$ is to be determined. Given a  { continuously} differentiable function $V(x)>0$ with $V(0)=0$, we arrive at the optimal feedback controller, 
\begin{align}
\label{eq: no-dyn-opt-ctrl}
u^*(x, { R})=-\frac{1}{2} R^{-1}\, \nabla_x V, 
\end{align}
associated with the cost function given by Theorem \ref{thm: H2-ctrl} as $$q(x, { R})=\frac{1}{4}\norm{\nabla_x V}^2_{R^{-1}}.$$ 
Observe that, due to the {trivial} system dynamics, i.e., $f(x)=0$, we can select any other control input matrix $R'>0$ {with $L(x,{R'})$}, while assuring optimality  { of $u^*(x,R')$ in \eqref{eq: no-dyn-opt-ctrl}}.

\subsection{Cost design for $\mathcal{H}_\infty-$control}
\label{subsec: H-infty-ctrl}
 { We now turn our attention to the disturbed/robust optimal control problem \eqref{eq: opt-prob} by setting $w\neq 0$. We arrive to the following result.}


\begin{proposition} 
\label{prop: H-infy}
Consider the  { robust optimal control problem \eqref{eq: opt-ctrl}} together with continuously differentiable function $V:\mathds{R}^n\mapsto \mathds{R}_{ > 0}$ associated with a controller $u^*$ in \eqref{eq: H2-opt-cost-1}.  { Let $w^*(x)=\frac{1}{2\,\xi}S^{-1} \overline G(x) \nabla_x V$}. Assume that,
{ \begin{align}
\label{eq: qw-pos}
 &\nabla_x V^\top\left(f(x) { \,+\,}G^\top(x)\, u^*(x)+  \overline G^\top(x)\, w^*(x)\right)\\ 
 &<-\norm{u^{*}(x)}^2_R+\xi\, \norm{w^{*}(x)}^2_S, \nonumber
\end{align}}
and define
\begin{align}
\label{eq: H-infty-type-1}
q(x, { R,S)}=& -\nabla_x V^\top \left(f(x) { \,+\,}G^\top(x)\, u^*(x)+  \overline G^\top(x)\, w^*(x)\right)\\
&-\norm{u^{*}(x)}^2_R+\xi\, \norm{w^{*}(x)}^2_S. \nonumber
\end{align}
Then, 
\begin{enumerate}
\item The optimal control $u^*$ is given by \eqref{eq: H2-opt-cost-1}. 
\item The {robust} optimal control problem \eqref{eq: opt-ctrl} {has the optimal value $V(x_0)$}.  
\end{enumerate}
\end{proposition}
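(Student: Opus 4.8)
The plan is to read \eqref{eq: hjb-pde} (with the disturbance channel $\overline G^\top(x)\,w$ adjoined to the dynamics) as a zero-sum differential game and to carry over the verification argument used for Theorem~\ref{thm: H2-ctrl}. First I would introduce the pre-Hamiltonian $H(x,u,w,\lambda)=L(x,u,w,R,S)+\lambda^\top\bigl(f(x)+G^\top(x)\,u+\overline G^\top(x)\,w\bigr)$ and set $\lambda=\nabla_x V$. Since $L$ contains the decoupled quadratic terms $\norm{u}_R^2$ and $-\xi\norm{w}_S^2$ with $R=R^\top>0$, $\xi>0$, $S=S^\top>0$, and no $u$--$w$ cross term, $H$ is strictly convex in $u$ for each fixed $w$ and strictly concave in $w$ for each fixed $u$; hence it admits a unique saddle point obtained from the stationarity conditions $\partial H/\partial u=2Ru+G(x)\nabla_x V=0$ and $\partial H/\partial w=-2\xi S w+\overline G(x)\nabla_x V=0$. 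These return exactly $u^*$ in \eqref{eq: H2-opt-cost-1} and $w^*(x)=\tfrac{1}{2\xi}S^{-1}\overline G(x)\nabla_x V$, and, by the absence of coupling, $\min_u\max_w H=\max_w\min_u H=H(x,u^*,w^*,\nabla_x V)$.

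Next I would substitute $(u^*,w^*)$ into $H$. Using $\nabla_x V^\top G^\top(x)\,u^*=-2\norm{u^*}_R^2$ and $\nabla_x V^\top\overline G^\top(x)\,w^*=2\xi\norm{w^*}_S^2$, the saddle value collapses to $q(x,R,S)+\nabla_x V^\top f(x)-\norm{u^*}_R^2+\xi\norm{w^*}_S^2$; with $q$ chosen as in \eqref{eq: H-infty-type-1} this is identically zero, so $V$ solves the HJIE. Positive definiteness of $q$ (and $q(0)=0$) then follows from \eqref{eq: qw-pos} together with the standing assumptions $f(0)=0$ and $\nabla_x V(0)=0$, exactly as \eqref{eq: q-pos} rendered \eqref{eq: q-x} positive definite in Theorem~\ref{thm: H2-ctrl}.

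With the HJIE verified, the remaining step is the standard completion-of-squares estimate along trajectories. Integrating $\dot V=\nabla_x V^\top\bigl(f+G^\top u+\overline G^\top w\bigr)$ and adding $L$ gives, for any admissible pair, $\int_0^T L\,\dd s=V(x_0)-V(x(T))+\int_0^T H(x,u,w,\nabla_x V)\,\dd s$. Taking $u=u^*$ and using that $H(x,u^*,\cdot,\nabla_x V)\le 0$ (its maximum over $w$ is $0$, attained at $w^*$) together with $V\ge 0$ yields $\int_0^\infty L\,\dd s\le V(x_0)$ for every $w$, i.e. $\sup_w\int_0^\infty L\,\dd s\le V(x_0)$ under $u^*$; conversely, taking $w=w^*$ and using $H(x,\cdot,w^*,\nabla_x V)\ge 0$ (its minimum over $u$ is $0$, attained at $u^*$) gives $\int_0^T L\,\dd s\ge V(x_0)-V(x(T))$ for every $u$, and letting $T\to\infty$ (a finite cost forces $x(t)\to 0$ via positive definiteness of $q$ and a standard detectability argument, hence $V(x(T))\to 0$; otherwise the cost is $+\infty$) gives $\inf_u\sup_w\int_0^\infty L\,\dd s\ge V(x_0)$. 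Combining the two bounds shows the game value equals $V(x_0)$ and is attained by the saddle pair $(u^*,w^*)$, which proves both claims.

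I expect the main obstacle to be precisely this last convergence/admissibility issue rather than the algebra: one must delimit the classes of disturbances and controls for which the closed loop is defined on $[0,\infty)$ and the game has a value, and one must secure $V(x(T))\to 0$ along the relevant trajectories. Unlike the nominal case, where \eqref{eq: q-pos} gives $\dot V<0$ outright, here \eqref{eq: qw-pos} only yields $\dot V<-\norm{u^*}_R^2+\xi\norm{w^*}_S^2$ along the worst-case closed loop, so monotone decay of $V$ (and hence a clean convergence argument) needs $\xi$ large enough, or an additional detectability hypothesis --- exactly the kind of technical condition invoked in the verification theorems of \cite[Ch.~2]{bacsar2008h} and \cite[Ch.~5]{liberzon2011calculus} that the proof would cite.
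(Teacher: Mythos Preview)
Your proposal is correct and follows essentially the same route as the paper: derive $u^*$ and $w^*$ as the stationary points of the (pre-)Hamiltonian with $\lambda=\nabla_x V$, substitute into the HJIE to see that the choice of $q$ in \eqref{eq: H-infty-type-1} makes it vanish, and invoke \eqref{eq: qw-pos} for positive definiteness of $q$. The paper then simply cites \cite[Thm~4.15]{bacsar2008h} for the conclusion $V(x_0)=\inf_u\sup_w\int_0^\infty L\,\dd s$, whereas you spell out the completion-of-squares verification and (rightly) flag the $V(x(T))\to 0$ admissibility issue that the citation absorbs.
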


\begin{proof}
For $w=0$, the optimal controller is given by \eqref{eq: H2-opt-cost-1}.
For $u=0$, we determine the worst case disturbance $w=w^*$, i.e., that maximizes the Hamiltonian function, $$H(x,u,\nabla_x V)=\max_w \{L (x, u, w)+\nabla^\top_x V(f(x)+ G^\top_w(x)\, w)\}.$$ This is achieved at $w=w^*$, where
\begin{align*}
-2\, \xi\, S w^*+  \overline G(x)  \nabla_x V=0, 
\end{align*}
which in turn implies that, 
\begin{align}
\label{eq: worst-w}  
w^*(x)=\frac{1}{2\,\xi}S^{-1} \overline G(x) \nabla_x V.
\end{align} 

Next, we plug in \eqref{eq: H2-opt-cost-1} into HJIE \eqref{eq: hjb-pde} and obtain,
\begin{align*}
&  q(x, { R,S)}+\nabla^\top_x V \left(f(x)-\frac{1}{4}G^\top(x)R^{-1} G(x)\nabla_x V\right)\\
&+\nabla^\top_x V \overline G^\top (x)\, w^*(x)-\xi \norm{w^{*}(x)}^2_S=0.\end{align*}    

By  { letting} $w^*(x)$ as in \eqref{eq: worst-w}, we arrive at the function $q(x, { R,S)}$ in \eqref{eq: H-infty-type-1} and the HJIE in \eqref{eq: hjb-pde} is satisfied.  { The positive definiteness of $q(x,R,S)$ is guaranteed by \eqref{eq: qw-pos}.}
 { This shows that $V$ is a value function of the robust optimal control problem \eqref{eq: opt-ctrl}.
  The optimal value is given by $V(x_0)$ and the proof is standard. See e.g. \cite[Thm 4.15]{bacsar2008h}}. 



\end{proof}

\begin{remark}
We have the following observations:
\begin{itemize}
 \item The system in closed-loop with \eqref{eq: H2-opt-cost-1} is finite-gain $\mathcal{L}_2-$ stable with $\mathcal{L}_2$ gain less than or equal to $2\sqrt{\xi}$.
\item For a given value function $V(x)$, the design matrices $R>~0$  and $S>0$ are tuning knobs that can be exploited to penalize the control input and disturbance deviations with the same $V$ {and any positive definite matrices $R'$ and $S'$ with $R'\leq R$, $S'\geq S$ and $L(\cdot, R', S')$ in \eqref{eq: opt-prob}.}
\item If it holds that, $$\nabla^\top_x V (f(x) {+\, \overline G^\top}(x)\, w^*(x))< 0,$$
 then, the origin is asymptotically stable for the worst case disturbance $w^*(x)$ and $V(x)$ is a Lyapunov function of the  { unforced} system. Thus,  {condition \eqref{eq: qw-pos} is always satisfied and $q(x,R',S')$} in \eqref{eq: Q-w} is positive {definite} independently of the choice of $R'$ and $S'$ and we can tune these design matrices arbitrarily using the same fixed $V$.
\end{itemize}
\end{remark}
We illustrate our approach using the following example.

{\em Example 4 (Linear systems)}
Given the LTI system, 
\begin{align}
\label{eq: sys-Hinfty-w}
 \dot x&= A\, x+ B\, u+ { \overline B} \, w, \quad  { x(0)=x_0} 
\end{align}
where $\overline B\in\real^{n\times n_w} $ is disturbance input matrix and $w\in\real^{n_w}$ is unknown additive disturbance. 
We define the cost function, 
\begin{align}
\label{eq: linear-cost}
L(x,u, w, R,S)= \norm{x}^2_Q+ \,  \norm{u}^2_R-\xi\, \norm{w}^2_S,\, \xi>0.    
\end{align} 
Following Proposition \ref{prop: H-infy}, we select
\begin{align}
\label{eq: Q-w}
\!\!\! { Q(R,S)}=\frac{1}{4\, } P  B R^{-1} B^\top P-\frac{1}{4 \xi} P \overline B S^{-1}\overline B^\top P-P\, A-A^\top P. 
\end{align} 
Given a positive definite matrix $P$, so that $Q>0$, where $Q$ is given in \eqref{eq: Q-w}. Then we can tune the design matrices $S$ and $R$ by choice of positive definite matrices $R'$ and $S'$ with $R'{ \leq} R$ and $ S' { \geq} S$ using the same matrix $P$ with  { $L(\cdot, R',S')$ in \eqref{eq: linear-cost}}. 

{\em Special case:} Under the assumption that $A$ is asymptotically stable, given a positive definite solution $P=K^{-1}$ where,
$$A\,K+K\, A^\top+\frac{1}{4\, \xi}  \overline B^\top\, S^{-1}\, \overline B < 0,$$
then $Q(R',S')>0$ as given in \eqref{eq: Q-w} and for any other positive definite matrices $R'$ and $S'$, the control law \eqref{eq: lin-opt-ctrl-cost} is optimal using the same matrix $P$ with  { $L(\cdot, R',S')$ in \eqref{eq: linear-cost}}.



\section{Application} 
\label{sec: application}
\subsection{Optimal control of coupled oscillators}
Consider a network of $n-$coupled oscillators whose $i-$th oscillator dynamics are described by the following differential equations.
\begin{align}
\label{eq: freq-droop}
\dot \theta_i&= \omega_i, {\qquad  i=1\dots n,} \\
M_i \dot \omega_i&=- D_i\, \omega_i- \sum_{j\in\mathcal{N}_i} b_{ij} \left( \sin(\theta_{ij})- \sin(\theta_{ij}^*)\right), \nonumber
\end{align}
with $M_i>0$ and $D_i>0$ and $b_{ij}>0$ denotes the coupling strength between the oscillators $i$ and $j$. 
Each oscillator is represented by its phase angle $\theta_i\in\real$ and frequency $\omega_i\in\real$.
Let $\omega=[\omega_1 ,\dots, \omega_n]^\top$, $\theta=[\theta_1 ,\dots,\theta_n]^\top$ and $\theta^*=[\theta^*_1, \dots, \theta^*_n]^\top$ be the vector of the {relative (to a nominal)} oscillator frequencies, oscillator angles and {nominal} steady state angles respectively. 
Define $\theta_{ij}=\theta_i-\theta_j$ and $\theta^*_{ij}=\theta^*_i-\theta^*_j$. Let $\mathcal{B}$ be the incidence matrix of the underlying graph $G$. 

Given a trajectory  $[\theta(t)^\top, \omega(t)^\top]^\top$ of \eqref{eq: freq-droop}, then $[(\theta(t)+\alpha \mathds{1}_n)^\top, \omega(t)^\top]^\top, \, \alpha\in\real$ is also a trajectory of the system \eqref{eq: freq-droop}. To eliminate this rotational invariance, we consider the following  coordinate transformation,
\begin{align}
\label{eq: coord-trafo}
\delta(t)= \mathcal{B}^\top \theta(t) \in\real^{m}.
\end{align}
Let $\theta^s$ be an induced steady state angle of \eqref{eq: opt-prob-droop} with steady state frequency $\omega^*=0$, $\delta^s=\mathcal{B}^\top\theta^s$ and  $\delta^*=\mathcal{B}^\top \theta^*\in\real^m$ be the nominal angle differences.
Observe that local asymptotic stability of $[\delta^{s\top}, 0^\top]^\top$ is equivalent to local asymptotic convergence of the solutions of \eqref{eq: freq-droop}  to $[\theta^{s\top}, 0^\top]^\top$. See for e.g.{ \cite{monshizadeh2017stability}}. {Next}, we make the following assumption. 
\begin{assumption}[\cite{monshizadeh2017stability}]
\label{ass: bounded-sol}
Assume that the steady state vector $\delta^s\in\real^m$ satisfies, 
\begin{align*}
\mathcal{B} \;\Xi \;\sin(\delta^s)=\mathcal{B} \;\Xi\sin(\delta^*),
\end{align*}
for all $\delta^s\in \text{Im}(B^\top)\cap (-\frac{\pi}{2}, \frac{\pi}{2})^m$.
\end{assumption}
Next, {consider} the following optimization problem, 
\begin{align}
 \label{eq: opt-prob-droop}
\min\limits_u& \int_0^\infty q(\delta(s), \omega(s))+\norm{u(s)}^2_R\,-\xi\, \norm{w(s)}^2_S  \; \dd s\nonumber\\
\mathrm{s.t. }&\quad \dot \delta= \mathcal{B}^\top\omega+ u,\nonumber \\
&M \dot \omega=- D\, \omega- \mathcal{B}\, \Xi \,\left(\underline{\sin} (\delta)-\underline{\sin} (\delta^*)\right)+w,\\
  &(\delta(0),\omega(0))=(\delta_0,\omega_0),\nonumber
\end{align} 
where $M>0$ and $D>0$ are diagonal matrices of inertia and damping coefficients and the coupling strengths $b_{ij}>0$ are collected in the diagonal matrix $\Xi=\mathrm{diag}(b_{ij})$. Let $\xi$ be a positive constant and $R =R^\top$ and $S =S^\top$ be positive definite matrices, $u=[u_1, \dots, u_m]^\top\in\real^m$ be the input and $w=[w_1,\dots, w_n]^\top\in\real^n$ the disturbance vector.
Furthermore, consider the following function (see e.g. \cite{monshizadeh2017stability, dorfler2012synchronization}) given by,
\begin{align}
\!\!V(\delta{ -\delta^ { s}}, \omega)=&\frac{1}{2} \norm{\omega}^2_M \,-\mathds{1}^\top_n\, \, \Xi \,\, (\underline{\cos}(\delta)-\underline{\cos}(\delta^ { s}))\nonumber \\
&-\,\, (\delta-\delta^ { s})^\top \, \Xi \,\,\underline{\sin}(\delta^ { s}).\!\!
\label{eq: LF}
\end{align}
 { It is noteworthy that under Assumption \ref{ass: bounded-sol}, $V(\delta-\delta^s,\omega)$ in \eqref{eq: LF} is locally (i.e., in a neighborhood $\Omega$ of $(\delta^s,0)$) positive definite.
}
Next, we have the following corollary.
\begin{corollary}
Consider the optimal control problem \eqref{eq: opt-prob-droop} under Assumption \ref{ass: bounded-sol}. The value function $V(\delta{ -\delta^s},\omega)$ given by \eqref{eq: LF} satisfies the HJBE \eqref{eq: hjb-pde} together with the following formulas for the cost functions.
\begin{enumerate}
\item For $w=0$, then
\begin{align*}
q(\delta, \omega,   R)=&\frac{1}{4} \norm{\underline{\sin}(\delta)- \underline{\sin}(\delta^ { s})}_{\Xi\; R^{-1}\; \Xi}^2+ \norm{\omega}^2_D.
\end{align*}
\item For $w\neq 0$, if $D-\frac{1}{4\,\xi}S^{-1}> 0$, then
\begin{align*}
q(\delta, \omega,   R,   S)=&\frac{1}{4}\norm{\underline{\sin}(\delta)- \underline{\sin}(\delta^ { s})}_{\Xi\; R^{-1}\; \Xi}^2+ \norm{\omega}^2_{D-\frac{1}{4\xi}S^{-1}}. 
\end{align*}
\end{enumerate}
Moreover, the optimal controller is uniquely given by
\begin{align}
\label{eq: opt-ctrl}
u^*(\delta{,  R})=-\frac{1}{2}\, R^{-1}\,\Xi\,(\underline{\sin}(\delta)- \underline{\sin}(\delta^ { s})).
\end{align}
\end{corollary}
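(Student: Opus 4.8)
The plan is to cast \eqref{eq: opt-prob-droop} as a special instance of the general problem \eqref{eq: opt-prob}, with state $x=(\delta-\delta^s,\omega)$, and then apply Theorem~\ref{thm: H2-ctrl} for $w=0$ and Proposition~\ref{prop: H-infy} for $w\neq 0$ to the candidate $V$ in \eqref{eq: LF}. First I would identify the data: since $u$ enters only the $\delta$-equation, the channel $G^\top(x)$ corresponds to $[I_m;\,0]$, while $w$ enters the $\omega$-equation through $M^{-1}$, so $\overline G^\top(x)$ corresponds to $[0;\,M^{-1}]$, and $f(x)$ collects the remaining right-hand sides. Differentiating \eqref{eq: LF} yields $\nabla_\omega V = M\omega$ and $\nabla_\delta V = \Xi(\underline{\sin}(\delta)-\underline{\sin}(\delta^s))$. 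Substituting these into \eqref{eq: H2-opt-cost-1} gives $u^*(\delta,R) = -\tfrac{1}{2}R^{-1}\Xi(\underline{\sin}(\delta)-\underline{\sin}(\delta^s))$, i.e.\ \eqref{eq: opt-ctrl}, and \eqref{eq: worst-w} gives the worst-case disturbance $w^*(x)=\tfrac{1}{2\xi}S^{-1}\omega$.

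The core computation is to evaluate $\nabla_x V^\top f(x)$ along the unforced dynamics. Here the decisive point is that the two cross-terms produced by $\nabla_\delta V^\top(\mathcal{B}^\top\omega)$ and by $\nabla_\omega V^\top M^{-1}(-\mathcal{B}\,\Xi(\underline{\sin}(\delta)-\underline{\sin}(\delta^*)))$ are, up to sign, both equal to $\omega^\top\mathcal{B}\,\Xi(\underline{\sin}(\delta)-\underline{\sin}(\delta^s))$ --- using Assumption~\ref{ass: bounded-sol} to rewrite $\mathcal{B}\,\Xi\underline{\sin}(\delta^*)=\mathcal{B}\,\Xi\underline{\sin}(\delta^s)$ --- and hence cancel, leaving $\nabla_x V^\top f(x)=-\norm{\omega}_D^2$. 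Short direct computations then give $\nabla_\delta V^\top G^\top(x)u^*(x) = -\tfrac{1}{2}\norm{\underline{\sin}(\delta)-\underline{\sin}(\delta^s)}_{\Xi R^{-1}\Xi}^2$ and $\norm{u^*(x)}_R^2 = \tfrac{1}{4}\norm{\underline{\sin}(\delta)-\underline{\sin}(\delta^s)}_{\Xi R^{-1}\Xi}^2$, and likewise $\nabla_\omega V^\top M^{-1}w^*(x)=\tfrac{1}{2\xi}\norm{\omega}_{S^{-1}}^2$ and $\xi\norm{w^*(x)}_S^2=\tfrac{1}{4\xi}\norm{\omega}_{S^{-1}}^2$. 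Inserting all of this into the cost formulas \eqref{eq: q-x} and \eqref{eq: H-infty-type-1} reproduces exactly the two expressions for $q$ claimed in the statement.

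It then remains to verify the hypotheses \eqref{eq: q-pos} and \eqref{eq: qw-pos} of the invoked results, which I expect to be the only place needing care. After the substitutions above, \eqref{eq: q-pos} becomes $-\norm{\omega}_D^2-\tfrac{1}{4}\norm{\underline{\sin}(\delta)-\underline{\sin}(\delta^s)}_{\Xi R^{-1}\Xi}^2<0$ and \eqref{eq: qw-pos} becomes $-\norm{\omega}_{D-\frac{1}{4\xi}S^{-1}}^2-\tfrac{1}{4}\norm{\underline{\sin}(\delta)-\underline{\sin}(\delta^s)}_{\Xi R^{-1}\Xi}^2<0$; both hold on a punctured neighbourhood $\Omega$ of $(\delta^s,0)$ because each summand is nonnegative (for the second, using the standing hypothesis $D-\tfrac{1}{4\xi}S^{-1}>0$) and, since $\delta\mapsto\underline{\sin}(\delta)$ is injective on $(-\pi/2,\pi/2)^m$, they vanish simultaneously only at the equilibrium --- which also gives positive definiteness of $q$ on $\Omega$. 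With $V$ in \eqref{eq: LF} locally positive definite under Assumption~\ref{ass: bounded-sol}, Theorem~\ref{thm: H2-ctrl} and Proposition~\ref{prop: H-infy} apply and deliver that $V$ solves the HJBE/HJIE \eqref{eq: hjb-pde} with the stated $q$ and that \eqref{eq: opt-ctrl} is the unique optimal controller, with optimal value $V(\delta_0-\delta^s,\omega_0)$. The main obstacle is essentially bookkeeping: tracking the two input channels and the $M^{-1}$ weighting correctly, and recognising that the skew-symmetric coupling through $\pm\mathcal{B}$ is exactly what makes the indefinite cross-term disappear.
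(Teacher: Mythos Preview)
Your proposal is correct and follows essentially the same route as the paper: compute $\nabla_\delta V=\Xi(\underline{\sin}(\delta)-\underline{\sin}(\delta^s))$, $\nabla_\omega V=M\omega$, observe that the skew-symmetric coupling through $\pm\mathcal{B}$ (together with Assumption~\ref{ass: bounded-sol}) makes the cross-terms cancel so that $\nabla_x V^\top f(x)=-\norm{\omega}_D^2$, and then feed this into Theorem~\ref{thm: H2-ctrl} and Proposition~\ref{prop: H-infy} to read off $u^*$, $w^*$ and the two expressions for $q$. The only noteworthy difference is in how the positivity hypothesis is checked: the paper argues local asymptotic stability of the unforced system via LaSalle's invariance principle (since $\dot V=-\norm{\omega}_D^2\le 0$) to certify $V$ as a Lyapunov function on a neighbourhood $\Omega$, whereas you verify \eqref{eq: q-pos} and \eqref{eq: qw-pos} directly using injectivity of $\underline{\sin}$ on $(-\pi/2,\pi/2)^m$; your argument is a bit more self-contained because LaSalle alone does not give the \emph{strict} inequality at points with $\omega=0$, $\delta\neq\delta^s$, and it is precisely the control term $-\tfrac{1}{4}\norm{\underline{\sin}(\delta)-\underline{\sin}(\delta^s)}_{\Xi R^{-1}\Xi}^2$ that closes that gap.
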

\begin{proof}
The two statements follow directly from Theorem \ref{thm: H2-ctrl} and Proposition \ref{prop: H-infy} with the Lyapunov function \eqref{eq: LF}.
{To see this,} Lie derivative of $V$ is given by $$\dot V(\delta{-\delta^s}, \omega)= -\norm{\omega}^2_D\,  \leq 0.$$
 Under Assumption \ref{ass: bounded-sol}, the sub-level sets of $V$ are bounded in a neighborhood $\Omega$ of $[\delta^{s\top}, 0^\top]^\top$. By applying Lasalle's invariance principle \cite{K02}, the trajectories of the dynamical system \eqref{eq: opt-prob-droop}  { starting at $\Omega$} converge to the set where $\omega=0$, which in turn implies that $\delta=\delta^s$, where $\delta^s-\delta^*$ is a constant angle vector. This establishes that $[\delta^{s\top}, 0^\top]^\top$ is locally asymptotically stable and $V$ in \eqref{eq: LF} is a  Lyapunov function for the system dynamics \eqref{eq: opt-prob-droop}, for all  { $x\in\Omega$}. For the second statement, the condition $D> \frac{1}{4\xi}S^{-1}$
ensures that $q(\cdot,  { R,  S})>0$ as in Proposition \ref{prop: H-infy}.
\end{proof}

Note that the controller $u^*(\delta{,  R})$ in \eqref{eq: opt-ctrl} is  { locally optimal, i.e., valid in a neighborhood $\Omega$ of $[\delta^{s\top}, 0^\top]^\top$} and distributed, i.e., depends on the angle differences of the neighboring oscillator angles and the functions $  q(\cdot, R')$ and $  q(\cdot, R', S')$  { remain} positive for any other positive definite matrices $R',   S'>0$.

\subsection{Simulations} 
\begin{figure}[ht!]
    \centering
    \includegraphics[scale=0.25, trim= 1.5cm 0cm 0cm 0cm]{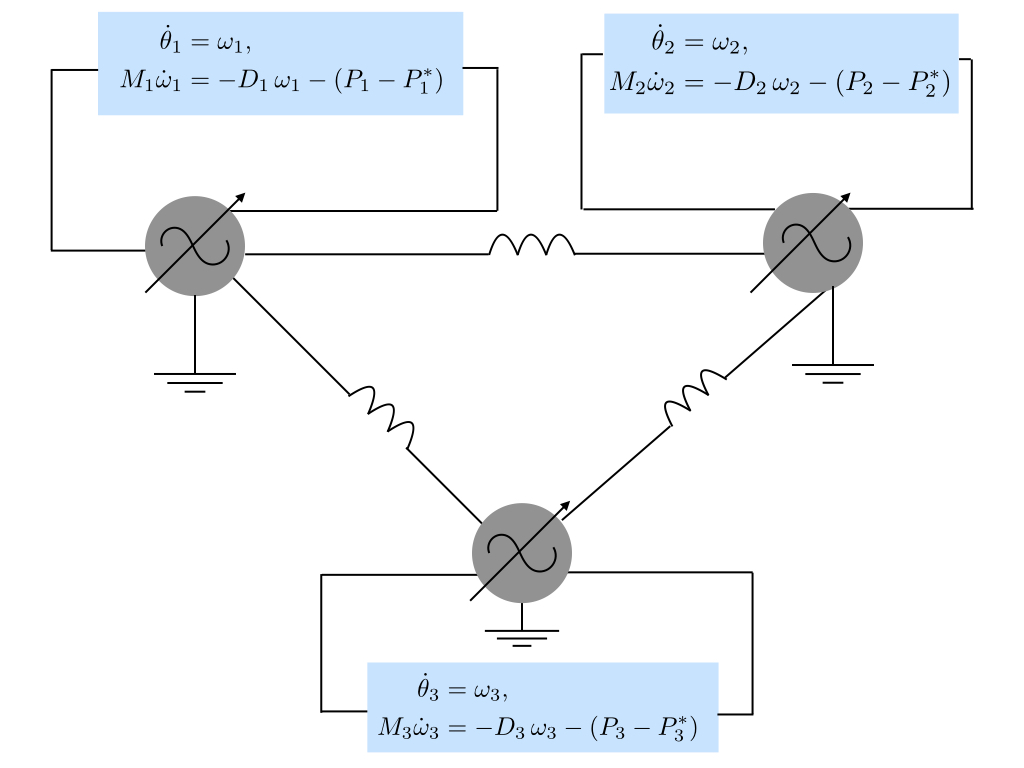}
    \caption{Three inverter system with dynamics given in \eqref{eq: freq-droop}, where $P_{i}=\sum_{j\in\mathcal{N}_i} b_{ij} \sin(\theta_{ij})$ and $P^*_i=\sum_{j\in\mathcal{N}_i} b_{ij} \sin(\theta^*_{ij})$ for $i=1,2,3$.}
    \label{fig: test-case}
\end{figure}
We adopt the same setup as in \cite{jouini2021optimal} and consider a network of three  inverters with system dynamics \eqref{eq: freq-droop}. The parameters $M_i$ and $D_i$ represent inertia and damping coefficients. The inverters are connected by purely inductive transmission lines with line susceptance $b_{ij}>0$ as shown in Figure \ref{fig: test-case}. 
We test numerically the derived optimal controller \eqref{eq: opt-ctrl} for nominal $(w=0)$ and disturbance attenuation $(w\neq 0)$ settings. The disturbance $w=[w_1,\dots,w_n]^\top\in\real^n$ models for e.g. DC-side generation and AC side fluctuations \cite{kundur2004definition}. For simplicity, we set all line susceptances $b_{ij}$ to one per unit (p.u.). The parameters in \eqref{eq: freq-droop} are 
chosen uniformly with $M_1=M_2=M_3=0.01 \mathrm{[s^2/rad]}$ and $D_1=D_2=D_3=0.1 \mathrm{[s/rad]}$.

Time-domain simulations of the open-loop angle differences and frequencies of the three inverter system with the  { unforced} inverter system (i.e., $u=0$) in \eqref{eq: freq-droop} and the desired steady state angle differences $\delta^*=[0,0,0]^\top$, starting at $\delta(0)=[0.02; 0.015, 0, 0]$ show that $\delta^s=[0.0113,
0.0113, -0.0113]$ and thus satisfy Assumption~\ref{ass: bounded-sol}. Moreover, the inverters frequencies synchronize at $ \omega^*=0$.

Next, we consider the optimal control problem \eqref{eq: opt-prob-droop} and implement the control law \eqref{eq: opt-ctrl} both for nominal $(w=0)$ and disturbance attenuation  $(w\neq 0)$. We additionally verify the optimal controller for two examples of the design matrix $R_1$ and $R_2$. Once in closed-loop with the optimal controller  \eqref{eq: opt-ctrl}, all frequencies synchronize at nominal  with a decay towards zero and improved transient behavior both for $R_1=0.1 \cdot I_3$ and $R_2=0.01\cdot I_3$ in Figures \ref{fig: R1} and \ref{fig: R2} respectively. Compared to the input matrix $R_1$, the matrix $R_2$ penalizes less the input variations and thus allows for more control input effort leading to faster error decay rate.
In the presence of non-zero, additive and randomly generated disturbances $w=[w_3, w_2, w_1]^\top$, Figure \ref{fig: S} shows that the frequencies remain bounded, albeit non-synchronized, which is in accordance with our theory. The nominal and disturbed cost functions are decreasing towards a value that is nearby zero.


\begin{figure}[ht!]
    \centering
    \includegraphics[scale=0.5]{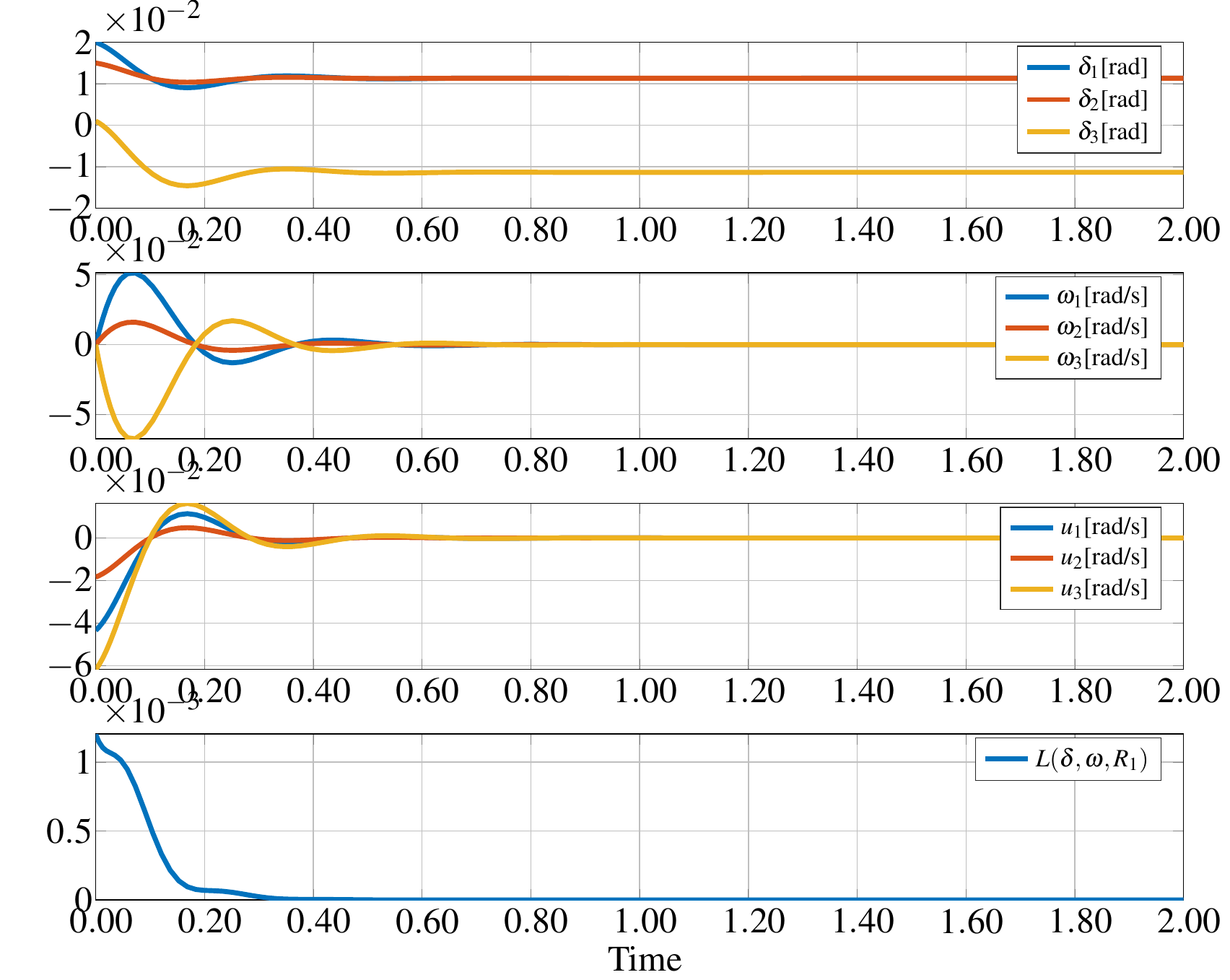}
    \caption{Simulations for $w=0$ of angle differences, frequencies, inputs and the cost function of the three inverter system described in Figure \ref{fig: test-case} for $w=0$ after closing the loop with the optimal control \eqref{eq: opt-ctrl} with $R_1=0.1\cdot I_3$. The angles are stabilized at the specified steady state and the frequencies synchronize and decay towards zero. The cost function $L(\delta, \omega, R_1)$  strictly decreases towards a nearby zero value.}
    \label{fig: R1}
\end{figure}

\begin{figure}[ht!]
    \centering
    \includegraphics[scale=0.5]{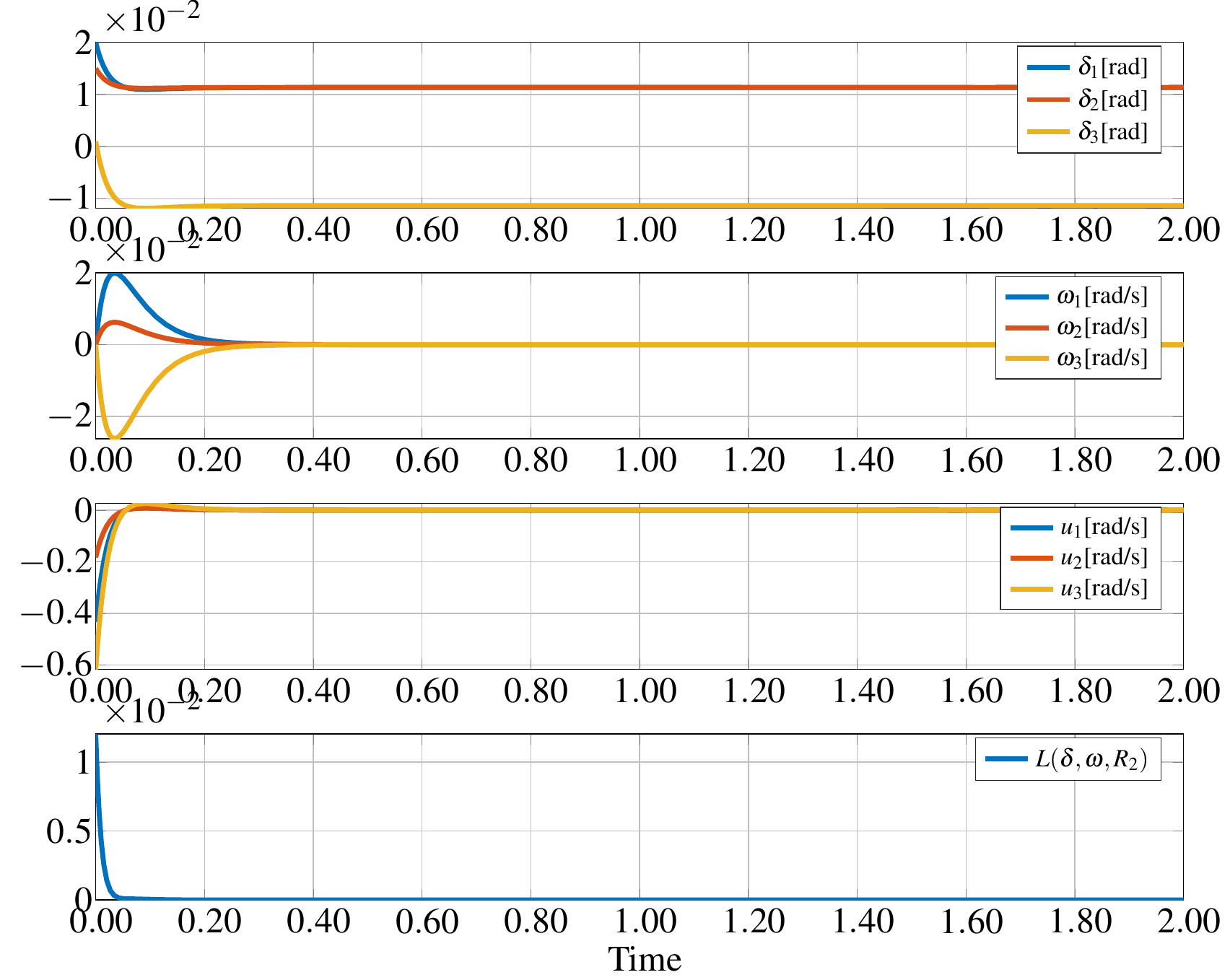}
    \caption{Simulations for $w=0$ of angle differences, frequencies, inputs and the cost function after closing the loop with the optimal control \eqref{eq: opt-ctrl} for $R_2=0.1\cdot R_1$. The error decay transients of the angles and frequencies improve significantly with more control effort (compared to Figure \ref{fig: R1}). The nominal cost function $L(\delta, \omega, R_2)$ decays towards a nearby zero value.}
    \label{fig: R2}
\end{figure}

\begin{figure}[ht!]
    \centering
    \includegraphics[scale=0.5]{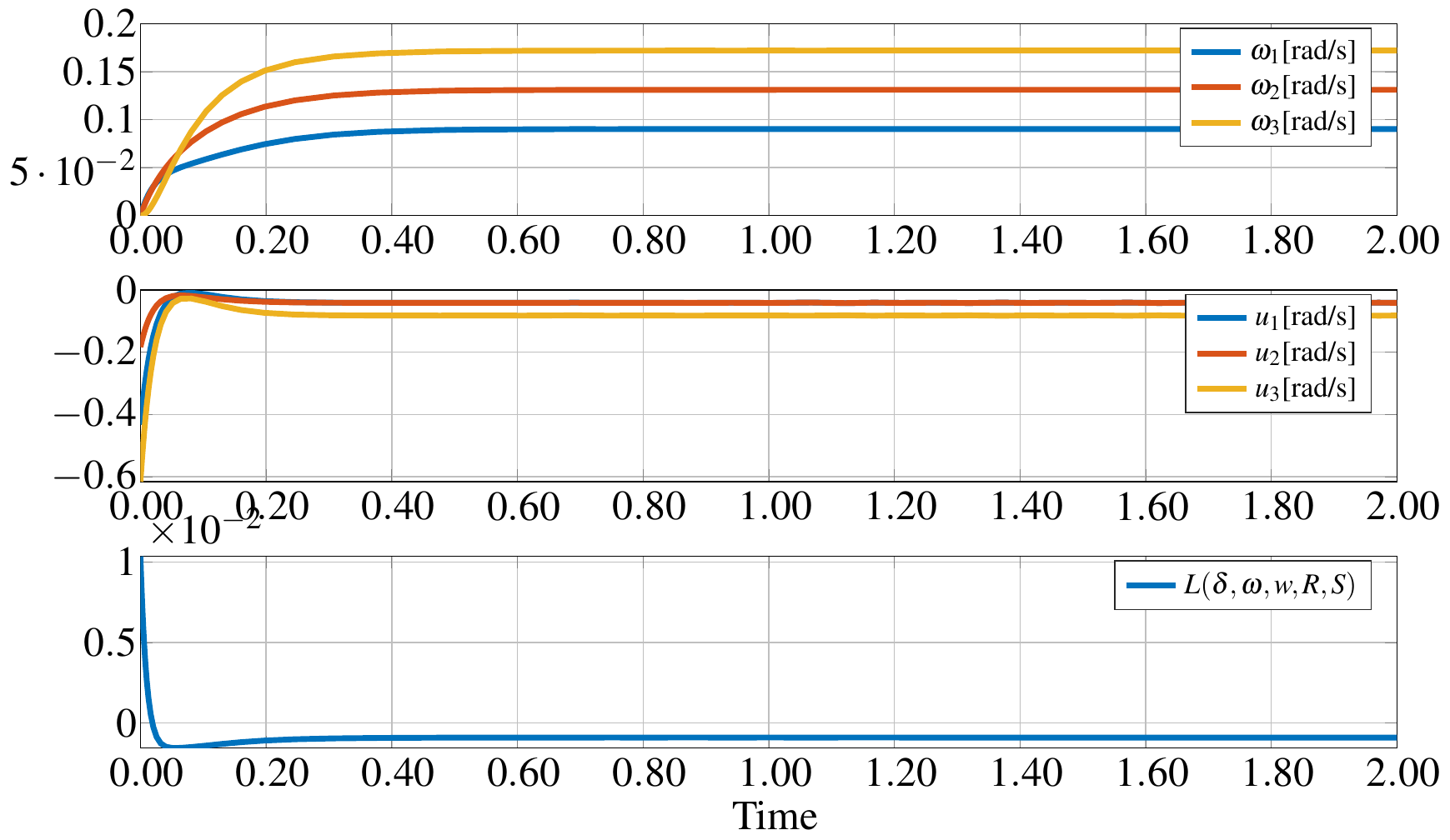}
    \caption{Simulations of the frequencies, input and cost function of the three-inverter system for constant non-zero disturbance $w$ in closed-loop with \eqref{eq: opt-ctrl} for $R=0.01\cdot I_3, \, S=I_3$ and $\xi=2.8>\frac{1}{4\,D}$. The frequencies remain bounded and the cost function $L(\delta,\omega,w,R,S)$ in \eqref{eq: opt-prob} takes negative values with a randomly generated disturbance $w\neq w^*(x)$.}
    \label{fig: S}
\end{figure}

\section{Conclusion}
We studied the role of cost design {for optimal feedback control} in satisfying HJBE  {or HJIE} in theory and via examples and an application to control of oscillatory systems. The optimal control problem reduces to a decision on how to tune the control gains, while the value function remains unchanged. The optimal controller is thus comparable to a linear quadratic regulator. It is in our future interest to investigate the ramifications of the proposed design method on the study of {passive systems and} constrained optimal control problems.

\bibliographystyle{IEEEtran}
\bibliography{root.bib}

\begin{thebibliography}{10}
\providecommand{\url}[1]{#1}
\csname url@rmstyle\endcsname
\providecommand{\newblock}{\relax}
\providecommand{\bibinfo}[2]{#2}
\providecommand\BIBentrySTDinterwordspacing{\spaceskip=0pt\relax}
\providecommand\BIBentryALTinterwordstretchfactor{4}
\providecommand\BIBentryALTinterwordspacing{\spaceskip=\fontdimen2\font plus
\BIBentryALTinterwordstretchfactor\fontdimen3\font minus
  \fontdimen4\font\relax}
\providecommand\BIBforeignlanguage[2]{{%
\expandafter\ifx\csname l@#1\endcsname\relax
\typeout{** WARNING: IEEEtran.bst: No hyphenation pattern has been}%
\typeout{** loaded for the language `#1'. Using the pattern for}%
\typeout{** the default language instead.}%
\else
\language=\csname l@#1\endcsname
\fi
#2}}

\bibitem{9186331}
M.~{Sassano} and A.~{Astolfi}, ``Combining {P}ontryagin's {P}rinciple and
  dynamic programming for linear and nonlinear systems,'' \emph{IEEE
  Transactions on Automatic Control}, vol.~65, no.~12, pp. 5312--5327, 2020.

\bibitem{bertsekas2011dynamic}
D.~P. Bertsekas, ``Dynamic programming and optimal control 3rd edition, volume
  ii,'' \emph{Belmont, MA: Athena Scientific}, 2011.

\bibitem{liberzon2011calculus}
D.~Liberzon, \emph{Calculus of variations and optimal control theory: a concise
  introduction}.\hskip 1em plus 0.5em minus 0.4em\relax Princeton University
  Press, 2011.

\bibitem{lukes1969optimal}
D.~L. Lukes, ``Optimal regulation of nonlinear dynamical systems,'' \emph{SIAM
  Journal on Control}, vol.~7, no.~1, pp. 75--100, 1969.

\bibitem{powell2007approximate}
W.~B. Powell, \emph{Approximate Dynamic Programming: Solving the curses of
  dimensionality}.\hskip 1em plus 0.5em minus 0.4em\relax John Wiley \& Sons,
  2007, vol. 703.

\bibitem{scherer2001theory}
C.~Scherer, ``Theory of robust control,'' \emph{Delft University of
  Technology}, pp. 1--160, 2001.

\bibitem{zhou1998essentials}
K.~Zhou and J.~C. Doyle, \emph{Essentials of robust control}.\hskip 1em plus
  0.5em minus 0.4em\relax Prentice hall Upper Saddle River, NJ, 1998, vol. 104.

\bibitem{bacsar2008h}
T.~Ba{\c{s}}ar and P.~Bernhard, \emph{H-infinity optimal control and related
  minimax design problems: a dynamic game approach}.\hskip 1em plus 0.5em minus
  0.4em\relax Springer Science \& Business Media, 2008.

\bibitem{jouini2021optimal}
T.~Jouini and E.~Tegling, ``Optimal control for power converters based on phase
  angle feedback,'' \emph{arXiv preprint arXiv:2101.11141}, 2021.

\bibitem{Sontag1999}
E.~D. Sontag, \emph{Control-Lyapunov functions}.\hskip 1em plus 0.5em minus
  0.4em\relax London: Springer London, 1999, pp. 211--216.

\bibitem{vinter2010optimal}
R.~Vinter, \emph{Optimal control}.\hskip 1em plus 0.5em minus 0.4em\relax
  Springer Science \& Business Media, 2010.

\bibitem{monshizadeh2017stability}
P.~Monshizadeh, C.~De~Persis, T.~Stegink, N.~Monshizadeh, and A.~van~der
  Schaft, ``Stability and frequency regulation of inverters with capacitive
  inertia,'' in \emph{2017 IEEE 56th Annual Conference on Decision and Control
  (CDC)}.\hskip 1em plus 0.5em minus 0.4em\relax IEEE, 2017, pp. 5696--5701.

\bibitem{dorfler2012synchronization}
F.~D\"orfler and F.~Bullo, ``Synchronization and transient stability in power
  networks and nonuniform kuramoto oscillators,'' \emph{SIAM Journal on Control
  and Optimization}, vol.~50, no.~3, pp. 1616--1642, 2012.

\bibitem{K02}
H.~K. Khalil, \emph{Nonlinear systems}, 3rd~ed.\hskip 1em plus 0.5em minus
  0.4em\relax Prentice hall New Jersey, 2002.

\bibitem{kundur2004definition}
P.~Kundur, J.~Paserba, V.~Ajjarapu, G.~Andersson, A.~Bose, C.~Canizares,
  N.~Hatziargyriou, D.~Hill, A.~Stankovic, C.~Taylor, \emph{et~al.},
  ``Definition and classification of power system stability,'' \emph{IEEE
  transactions on Power Systems}, vol.~19, no.~2, pp. 1387--1401, 2004.

\end{thebibliography}

\end{document}